\newtheorem{thm}{Theorem}[section]  
\newtheorem{lem}[thm]{Lemma}
\newtheorem{corr}[thm]{Corollary}
\newtheorem{asmp}[thm]{Assumption}
\newcommand{\norm}[1]{\left|\left|#1\right|\right|}
\newcommand{\nn}{\nonumber}
\newcommand{\Real}{{\mathbbm{R}}}
\newcommand{\la}{\langle}
\newcommand{\ra}{\rangle}
\newcommand{\order}{\mathcal{O}}
\newcommand{\torder}{\widetilde{\mathcal{O}}}
\DeclareMathOperator*{\argmin}{arg\,min}
\newcommand{\prox}{\text{prox}}
\newcommand{\sj}{\sum_{j=(k-K)_+}^{k-1}}
\newcommand{\nfk}{\nabla f(x_k)}
\newcommand{\nfs}{\nabla f(x^*)}
\newcommand{\ndk}{\norm{d_k}}
\newcommand{\ndj}{\norm{d_j}}
\newcommand{\ndi}{\norm{d_i}}
\newcommand{\nxkxs}{\norm{x_k-x^*}}
\title{Global Convergence Rate of Proximal Incremental Aggregated Gradient Methods}
\author{
N.~D.~Vanli\thanks{Laboratory for Information and Decision Systems, Massachusetts Institute of Technology, Cambridge, MA 02139, USA. email: \{denizcan, mertg, asuman\}@mit.edu.}
\and M.~G\"urb\"uzbalaban\footnotemark[1]
\and A.~Ozdaglar\footnotemark[1]
}
\date{\today}
\begin{document}

\maketitle

\begin{abstract}
We focus on the problem of minimizing the sum of smooth component functions (where the sum is strongly convex) and a non-smooth convex function, which arises in regularized empirical risk minimization in machine learning and distributed constrained optimization in wireless sensor networks and smart grids. We consider solving this problem using the proximal incremental aggregated gradient (PIAG) method, which at each iteration moves along an aggregated gradient (formed by incrementally updating gradients of component functions according to a deterministic order) and taking a proximal step with respect to the non-smooth function. While the convergence properties of this method with randomized orders (in updating gradients of component functions) have been investigated, this paper, to the best of our knowledge, is the first study that establishes the convergence rate properties of the PIAG method for any deterministic order. In particular, we show that the PIAG algorithm is globally convergent with a linear rate provided that the step size is sufficiently small. We explicitly identify the rate of convergence and the corresponding step size to achieve this convergence rate. Our results improve upon the best known condition number dependence of the convergence rate of the incremental aggregated gradient methods used for minimizing a sum of smooth functions.
\end{abstract}

\section{Introduction}\label{sec:intro}
We focus on {\it composite additive cost optimization problems}, where the objective function is given by the sum of $m$ component functions $f_i(x)$ and a possibly non-smooth regularization function $r(x)$:
\begin{equation}\label{eq:goal}
  \min_{x\in\Real^n} F(x) \triangleq f(x) + r(x),
\end{equation}
and $f(x) = \frac{1}{m} \sum_{i=1}^m f_i(x)$. We assume each component function $f_i:\Real^n\to(-\infty,\infty)$ is convex and continuously differentiable while the regularization function $r:\Real^n\to(-\infty,\infty]$ is proper, closed, and convex but not necessarily differentiable.
This formulation arises in many problems in machine learning, distributed optimization, and signal processing. Notable examples include constrained and regularized least squares problems that arise in various machine learning applications \cite{BachSagaMethod14,hogwild}, distributed optimization problems that arise in wireless sensor network as well as smart grid applications \cite{nedic2001distributed, smartgrid} and constrained optimization of separable problems \cite{bertsekas2011incremental}.
An important feature of this formulation is that the number of component functions $m$ is large, hence solving this problem using a standard gradient method that involves evaluating the full gradient of $f(x)$, i.e., $\nabla f(x) = \sum_{i=1}^m \nabla f_i(x)$, is costly. This motivates using {\it incremental methods} that exploit the additive structure of the problem and update the decision vector using one component function at a time.

When $r$ is continuously differentiable, one widely studied approach is the incremental gradient (IG) method \cite{NedicBertsekas01,Solodov98IncrGrad, bertsekas2011incremental}. The IG method processes the component functions one at a time by taking steps along the gradient of each individual function in a sequential manner, following a cyclic order \cite{TsengYun2014incremental,tsitsiklis_bertsekas} or a randomized order \cite{RobbinsMonro,mert_RandomReshuffling,tsitsiklis_bertsekas}. A particular randomized order, which at each iteration independently picks a component function uniformly at random from all component functions leads to the popular stochastic gradient descent (SGD) method. While SGD is the method of choice in practice for many machine learning applications due to its superior empirical performance and convergence rate estimates that does not depend on the number of component functions $m$, its convergence rate is  sublinear, i.e., an $\epsilon$-optimal solution can be computed within $ O(1/\epsilon)$ iterations.\footnote{Let $x^*$ be an optimal solution of the problem \eqref{eq:goal}. A vector $x\in\Real^n$ is an {\em $\epsilon$-optimal solution} if $F(x)-F(x^*)\leq\epsilon$.} In a seminal paper, Blatt {\it et al.}  \cite{Blatt2007incremental} proposed the {\it incremental aggregated gradient (IAG) method}, which maintains the savings associated with incrementally accessing the component functions, but keeps the most recent component gradients in memory to approximate the full gradient $\nabla f(x)$ and updates the iterate using this aggregated gradient. Blatt {\it et al.} showed that under some assumptions, for a sufficiently small constant step size, the IAG method is globally convergent and when the component functions are quadratics, it achieves a linear rate. Two recent papers, \cite{Leroux2012sgd} and \cite{mert_iag}, investigated the convergence rate of this method for general component functions that are convex and smooth (i.e., with Lipschitz gradients), where the sum of the component functions is strongly convex: In \cite{Leroux2012sgd}, the authors focused on a randomized version, called stochastic average gradient (SAG) method (which samples the component functions independently similar to SGD), and showed that it achieves a linear rate using a proof that relies on the stochastic nature of the algorithm. In a more recent work \cite{mert_iag}, the authors focused on deterministic IAG (i.e., component functions processed using an arbitrary deterministic order) and provided a simple analysis that uses a delayed dynamical system approach to study the evolution of the iterates generated by this algorithm.

While these recent advances suggest IAG as a promising approach with fast convergence rate guarantees for solving additive cost problems, in many applications listed above, the objective function takes a composite form and includes a non-smooth regularization function $r(x)$ (to avoid overfitting or to induce a sparse representation). Another important case of interest is smooth constrained optimization problems which can be represented in the composite form (\ref{eq:goal}) where the function $r(x)$ is the indicator function of a nonempty closed convex set.

In this paper, we study the {\it proximal incremental aggregated gradient (PIAG) method} for solving composite additive cost optimization problems. Our method computes an aggregated gradient for the function $f(x)$ (with component gradients evaluated in a {\it deterministic manner} at outdated iterates over a finite window $K$, similar to IAG) and uses a proximal operator with respect to the regularization function $r(x)$ at the intermediate iterate obtained by moving along the aggregated gradient. Under the assumptions that $f(x)$ is strongly convex and each $f_i(x)$ is smooth with Lipschitz gradients, we show the first {\it linear convergence rate} result for the deterministic PIAG and provide explicit convergence rate estimates that highlight the dependence on the condition number of the problem (which we denote by $Q$) and the size of the window $K$ over which outdated component gradients are evaluated. In particular, we show that in order to achieve an $\epsilon$-optimal solution, the PIAG algorithm requires $\order(QK^2 \log^2(QK)\log(1/\epsilon))$ iterations, or equivalently $\torder(QK^2\log(1/\epsilon))$ iterations, where the tilde is used to hide the logarithmic terms in $Q$ and $K$. This result improves upon the condition number dependence of the deterministic IAG for smooth problems \cite{mert_iag}, where the authors proved that to achieve an $\epsilon$-optimal solution, the IAG algorithm requires $\order(Q^2K^2\log(1/\epsilon))$ iterations. We also note that two recent independent papers \cite{proxgrad_schmidt, proxgrad_lewis} have analyzed the convergence rate of the prox-gradient algorithm (which is a special case of our algorithm with $K=0$, i.e., where we have access to a full gradient at each iteration instead of an aggregated gradient) under strong convexity type assumptions and provided linear rate estimates. Our rate estimates for the PIAG algorithm with $K>0$ matches the condition number dependence of the prox-gradient algorithm provided in these papers \cite{proxgrad_schmidt, proxgrad_lewis} up to logarithmic factors. Furthermore, for the case $K=0$ (i.e., for the prox-gradient algorithm), the rate estimates obtained using our analysis technique can be shown to have the same condition number dependence as the ones presented in \cite{proxgrad_schmidt, proxgrad_lewis}.

Our analysis uses function values to track the evolution of the iterates generated by the PIAG algorithm. This is in contrast with the recent analysis of the IAG algorithm provided in \cite{mert_iag}, which used distances of the iterates to the optimal solution as a {\em Lyapunov function} and relied on the smoothness of the problem to bound the gradient errors with distances. This approach does not extend to the non-smooth composite case, which motivates a new analysis using function values and the properties of the proximal operator. Since we work directly with function values, this approach also allows us to obtain iteration complexity results to achieve an $\epsilon$-optimal solution.


In terms of the algorithmic structure, our paper is related to \cite{BachSagaMethod14}, where the authors introduce the SAGA method, which extends the SAG method to the composite case and provides a linear convergence rate result with an analysis that relies on the stochastic nature of the algorithm and does not extend to the deterministic case. Particularly, the SAGA method samples the component functions randomly and independently at each iteration without replacement (in contrast with the PIAG method, where the component functions are processed deterministically). However, such random sampling may not be possible for applications such as decentralized information processing in wireless sensor networks (where agents are subject to communication constraints imposed by the network topology and all agents are not necessarily connected to every other agent via a low-cost link \cite{nedic2001distributed}), motivating the study of the deterministic PIAG method. In \cite{BachSagaMethod14}, the authors prove that to achieve a point in the $\epsilon$-neighborhood of the optimal solution, SAGA requires $\order(\max(Q,K)\log(1/\epsilon))$ iterations.\footnote{Let $x^*$ be an optimal solution of the problem \eqref{eq:goal}. A vector $x\in\Real^n$ is in the {\em $\epsilon$-neighborhood of an optimal solution} if $\norm{x-x^*}\leq\epsilon$.} However, note that this result does not translate into a guarantee in the function suboptimality of the resulting point because of lack of smoothness. Furthermore, the choice of Lyapunov function in \cite{BachSagaMethod14} requires each $f_i(x)$ to be convex (to satisfy the non-negativity condition), whereas we do not need this assumption in our analysis.

Our work is also related to \cite{TsengYun2014incremental}, where the authors propose a related linearly convergent incrementally updated gradient method for solving the composite additive cost problem in \eqref{eq:goal} under a local Lipschitzian error condition (a condition satisfied by locally strongly convex functions around an optimal solution). The PIAG algorithm is different from the algorithm proposed in \cite{TsengYun2014incremental}. Specifically, for constrained optimization problems (i.e., when the regularization function is the indicator function of a nonempty closed convex set), the iterates generated by the algorithm in \cite{TsengYun2014incremental} stay in the interior of the set since the algorithm in \cite{TsengYun2014incremental} searches for a feasible update direction. On the other hand, the PIAG algorithm uses the proximal map on the intermediate iterate obtained by moving in the opposite direction of the aggregated gradient, which operates as a projected gradient method and allows the iterates to be on the boundary of the set. Aside from algorithmic differences, \cite{TsengYun2014incremental} does not provide explicit rate estimates (even though the exact rate can be calculated after an elaborate analysis, the dependence on the condition number and the window length of the outdated gradients is significantly worse than the one presented in this paper). Furthermore, the results in \cite{TsengYun2014incremental} provides a $K$-step linear convergence, whereas the linear convergence results in our paper hold uniformly for each step.

Other than the papers mentioned above, our paper is also related to  \cite{bertsekas_iap}, which studies an alternative incremental aggregated proximal method and shows linear convergence when each $f_i(x)$ and $r(x)$ are continuously differentiable. This method forms a linear approximation to $f(x)$ and processes the component functions $f_i(x)$ with a proximal iteration, whereas our method processes $f_i(x)$ based on a gradient step. Furthermore, our linear convergence results do not require the differentiability of the objective function $r(x)$ in contrast to the analysis in \cite{bertsekas_iap}.

Several recent papers in the machine learning literature (e.g., \cite{BachSagaMethod14,svrg,finito,miso,universal_catalyst} and references therein) are also weakly related to our paper. In all these papers, the authors propose randomized order algorithms similar to the SAG algorithm \cite{Leroux2012sgd} and analyze their convergence rates in expectation. In particular, in \cite{finito}, the authors propose an algorithm, called Finito, which is closely related to the SAG algorithm but achieves a faster convergence rate than the SAG algorithm. These ideas are then extended to composite optimization problems with non-smooth objective functions (as in \eqref{eq:goal}) in \cite{miso,BachSagaMethod14}. In particular, in \cite{miso}, a majorization-minimization algorithm, called MISO, is proposed to solve smooth optimization problems and its global linear convergence is shown in expectation. In \cite{universal_catalyst}, the ideas in \cite{miso} are then extended for non-smooth optimization problems using proximal operator. Similarly, in \cite{svrg}, a variance reduction technique is applied to the SGD algorithm for smooth problems and its global linear convergence in expectation is proven.

The rest of the paper is organized as follows. In Section \ref{sec:piag}, we introduce the PIAG algorithm. In Section \ref{sec:main}, we first provide the assumptions on the objective functions and then prove the global linear convergence of the proposed algorithm under these assumptions. We conclude the paper in Section \ref{sec:conclusion} with a summary of our results.

\section{The PIAG Algorithm}\label{sec:piag}
Similar to the IAG method, at each iteration $k$, we form an aggregated gradient, which we denote as follows
\begin{equation}
  g_k \triangleq \frac{1}{m} \sum_{i=1}^m \nabla f_i(x_{\tau_{i,k}}), \nn
\end{equation}
where $\nabla f_i(x_{\tau_{i,k}})$ represents the gradient of the $i$th component function sampled at time $\tau_{i,k}$. We assume that each component function is sampled at least once in the past $K\geq0$ iterations, i.e., we have
\begin{equation}
  k-K \leq \tau_{i,k} \leq k, \,\,\, \forall i\in\{1,\dots,m\}. \nn
\end{equation}
This condition is typically satisfied in practical implementations of the deterministic incremental methods. For instance, if the functions are processed in a cyclic order, we have $K=m-1$  \cite{TsengYun2014incremental, mert_RandomReshuffling}. On the other hand, $K=0$ corresponds to the case where we have the full gradient of the function $f(x)$ at each iteration (i.e., $g_k=\nfk$) and small $K$ may represent a setting in which the gradients of the component functions are sent to a processor with some delay upper bounded by $K$.

Since the regularization function $r$ is not necessarily differentiable, we propose to solve \eqref{eq:goal} with the proximal incremental aggregated gradient (PIAG) method, which uses the proximal operator with respect to the regularization function at the intermediate iterate obtained using the aggregated gradient. In particular, the PIAG algorithm, at each iteration $k\geq0$, updates $x_k$ as
\begin{equation}\label{eq:updaterule}
  x_{k+1} = \prox_r^\eta(x_k - \eta g_k),
\end{equation}
where $\eta$ is a constant step size and the proximal mapping is defined as follows
\begin{equation}\label{eq:prox}
  \prox_r^\eta(y) = \argmin_{x\in\Real^n} \left\{ \frac{1}{2} \norm{x-y}^2 + \eta r(x) \right\}.
\end{equation}
Here, we define $\phi(x) \triangleq \frac{1}{2} \norm{x-y}^2 + \eta r(x)$ and let $\partial \phi(x)$ denote the set of subgradients of the function $\phi$ at $x$. Then, it follows from the optimality conditions \cite{Bertsekas15Book} of the problem in \eqref{eq:prox} that $0 \in \partial \phi(x_{k+1})$. This yields $x_{k+1}-(x_k - \eta g_k)+\eta h_{k+1}=0$, for some $h_{k+1} \in \partial r(x_{k+1})$. Hence, we can compactly represent our update rule as
\begin{equation}\label{eq:update}
  x_{k+1} = x_k + \eta d_k,
\end{equation}
where $d_k \triangleq -g_k-h_{k+1}$ is the direction of the update at time $k$.

\section{Convergence Analysis}\label{sec:main}
\subsection{Assumptions}
Throughout the paper, we make the following standard assumptions.

\begin{asmp}\label{asmp:lips}\textbf{(Lipschitz gradients)}
Each $f_i$ has Lipschitz continuous gradients on $\Real^n$ with some constant $L_i \geq 0$, i.e.,
\begin{equation}
  \norm{\nabla f_i(x)-\nabla f_i(y)} \leq L_i \norm{x-y}, \nn
\end{equation}
for any $x,y\in\Real^n$.\footnote{If a function $f$ has Lipschitz continuous gradients with some constant $L$, then $f$ is called $L$-smooth. We use these terms interchangeably.}
\end{asmp}

Defining $L \triangleq \frac{1}{m} \sum_{i=1}^m L_i$, we observe that Assumption \ref{asmp:lips} and the triangle inequality yield
\begin{equation}
  \norm{\nabla f(x)-\nabla f(y)} \leq L \norm{x-y}, \nn
\end{equation}
for any $x,y\in\Real^n$, i.e., the function $f$ is $L$-smooth.

\begin{asmp}\label{asmp:conv}\textbf{(Strong Convexity)}
The sum function $f$ is $\mu$-strongly convex on $\Real^n$ for some $\mu>0$, i.e., the function $x \mapsto f(x) - \frac{\mu}{2} \norm{x}^2$ is convex, and the regularization function $r$ is convex on $(-\infty,\infty]$.
\end{asmp}

A consequence of Assumption \ref{asmp:conv} is that $F$ is strongly convex, hence there exists a unique optimal solution of problem \eqref{eq:goal} \cite[Lemma 6]{primaldual}, which we denote by $x^*$.

We emphasize that these assumptions hold for a variety of cost functions including regularized squared error loss, hinge loss, and logistic loss \cite{Bubeck15CvxBook} and similar assumptions are widely used to analyze the convergence properties of incremental gradient methods in the literature \cite{BachSagaMethod14,bertsekas_iap,bertsekas_ipm,Leroux2012sgd,mert_iag}. Note that in contrast with many of these analyses, we do not assume that the component functions $f_i$ are convex.

\subsection{Rate of Convergence}
In this section, we show that the PIAG algorithm attains a global linear convergence rate with a constant step size provided that the step size is sufficiently small. We define
\begin{equation}\label{eq:lyapunov}
  F_k \triangleq F(x_k) - F(x^*),
\end{equation}
which is the suboptimality in the objective value at iteration $k$. In our analysis, we will use $F_k$ as a Lyapunov function to prove global linear convergence. Before providing the main theorems of the paper, we first introduce three lemmas that contain key relations in proving these theorems.

The first lemma investigates how the suboptimality in the objective value evolves over the iterations. In particular, it shows that the change in suboptimality $F_{k+1} - F_k$ can be bounded as a sum of two terms: The first term is negative and has a linear dependence in the step size $\eta$, whereas the second term is positive and has a quadratic dependence in $\eta$. This suggests that if the step size $\eta$ is small enough, the linear term in $\eta$ will be dominant guaranteeing a descent in suboptimality.

\begin{lem}\label{prop1}
Suppose that Assumptions 1 and 2 hold. Then, the PIAG algorithm in \eqref{eq:updaterule} yields the following guarantee
\begin{equation}
  F_{k+1} \leq F_k - \frac{1}{2} \eta \norm{d_k}^2 + \eta^2 \frac{L}{2} \sj\ndj^2, \nn
\end{equation}
for any step size $0<\eta\leq\frac{1}{L(K+1)}$.
\end{lem}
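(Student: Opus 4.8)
The plan is to combine the descent lemma for the smooth part $f$ with the convexity of the regularizer $r$, and then control the resulting gradient-error term $\la\nfk-g_k,d_k\ra$ using the Lipschitz bound of Assumption 1 together with the telescoping identity for the iterate gaps. First I would use that $f$ is $L$-smooth (a consequence of Assumption 1) at $x_k$ and $x_{k+1}=x_k+\eta d_k$ to obtain
\[
  f(x_{k+1}) \leq f(x_k) + \eta\la\nfk,d_k\ra + \frac{L\eta^2}{2}\ndk^2 .
\]
Next, since $h_{k+1}\in\partial r(x_{k+1})$, convexity of $r$ (Assumption 2) gives $r(x_k)\geq r(x_{k+1})+\la h_{k+1},x_k-x_{k+1}\ra$, i.e.\ $r(x_{k+1})\leq r(x_k)+\eta\la h_{k+1},d_k\ra$. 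Adding these two inequalities and recalling $F=f+r$ yields $F_{k+1}\leq F_k+\eta\la\nfk+h_{k+1},d_k\ra+\frac{L\eta^2}{2}\ndk^2$. The key algebraic step is to rewrite $\nfk+h_{k+1}=(\nfk-g_k)-d_k$ using $d_k=-g_k-h_{k+1}$, so the inner product splits into $-\ndk^2$ plus the gradient-error term $\la\nfk-g_k,d_k\ra$.

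It then remains to bound the gradient error. I would write $\nfk-g_k=\frac1m\sum_{i=1}^m\bigl(\nabla f_i(x_k)-\nabla f_i(x_{\tau_{i,k}})\bigr)$, apply the per-component Lipschitz estimate together with $x_k-x_{\tau_{i,k}}=\eta\sum_{j=\tau_{i,k}}^{k-1}d_j$, and use $(k-K)_+\leq\tau_{i,k}\leq k$ to get $\norm{\nfk-g_k}\leq L\eta\sj\ndj$ (here $L=\frac1m\sum_i L_i$). Applying Cauchy--Schwarz and then the symmetric Young inequality $\ndk\ndj\leq\frac12\ndk^2+\frac12\ndj^2$ to each of the at most $K$ terms in the sum bounds $\eta\la\nfk-g_k,d_k\ra$ by $\frac{L\eta^2K}{2}\ndk^2+\frac{L\eta^2}{2}\sj\ndj^2$. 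Collecting all $\ndk^2$ contributions, the coefficient of $\ndk^2$ becomes $-\eta+\frac{L\eta^2(K+1)}{2}$, which is at most $-\tfrac{\eta}{2}$ exactly when $L\eta(K+1)\leq1$, i.e.\ when $0<\eta\leq\frac{1}{L(K+1)}$; this gives the claimed inequality.

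The main obstacle is the bookkeeping in this last step: one must use the symmetric (unweighted) split in Young's inequality so that the coefficient of $\sj\ndj^2$ comes out exactly as $\frac{L\eta^2}{2}$ as stated, and one must verify that the index set $\{\tau_{i,k},\dots,k-1\}$ has at most $K$ elements so that the extra self-terms aggregate to the factor $K+1$ that is then absorbed by the descent-lemma contribution precisely under the stated step-size restriction. Everything else is a routine manipulation of the descent and subgradient inequalities.
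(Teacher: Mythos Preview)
Your proposal is correct and follows essentially the same argument as the paper: descent lemma for $f$, subgradient inequality for $r$ via $h_{k+1}\in\partial r(x_{k+1})$, the algebraic identity $\nabla f(x_k)+h_{k+1}=(\nabla f(x_k)-g_k)-d_k$, the gradient-error bound $\norm{\nabla f(x_k)-g_k}\le L\eta\sj\ndj$, and then AM--GM to absorb the cross terms under the step-size condition $\eta\le\frac{1}{L(K+1)}$. The only cosmetic difference is that the paper carries $r(x_{k+1})-r(x_k)$ a bit longer before invoking convexity, whereas you add the subgradient inequality for $r$ immediately; the resulting chain of inequalities and constants are identical.
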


\begin{proof}
We first consider the difference of the errors in consecutive time instances and write
\begin{align}
  F(x_{k+1}) - F(x_k) & = f(x_{k+1}) - f(x_k) + r(x_{k+1}) - r(x_k) \nn\\
    & \leq \la \nfk, x_{k+1}-x_k \ra + \frac{L}{2} \norm{x_{k+1}-x_k}^2  + r(x_{k+1}) - r(x_k), \nn
\end{align}
where the inequality follows from the Taylor series expansion of $f$ around $x_k$ and since the Hessian of $f$ at any point is upper bounded by $L$ by Assumption \ref{asmp:lips}. Using the update rule $x_{k+1}=x_k+\eta d_k$ in this inequality, we obtain
\begin{align}
  F(x_{k+1}) - F(x_k) & = \eta \la \nfk, d_k \ra + \eta^2 \frac{L}{2} \ndk^2 + r(x_{k+1}) - r(x_k) \nn\\
    & = \eta \la \nfk-g_k, d_k \ra + \eta^2 \frac{L}{2} \ndk^2 + \eta \la g_k, d_k \ra + r(x_{k+1}) - r(x_k) \nn\\
    & \leq \eta \norm{\nfk-g_k} \norm{d_k} + \eta^2 \frac{L}{2} \ndk^2 - \eta \ndk^2 - \eta \la h_{k+1}, d_k \ra + r(x_{k+1}) - r(x_k) \nn\\
    & = \eta \norm{\nfk-g_k} \norm{d_k} + \eta \left( \eta \frac{L}{2} - 1 \right) \ndk^2 + \la h_{k+1}, x_k - x_{k+1} \ra + r(x_{k+1}) - r(x_k) \nn\\
    & \leq \eta \norm{\nfk-g_k} \norm{d_k} + \eta \left( \eta \frac{L}{2} - 1 \right) \ndk^2, \label{eq:c1}
\end{align}
where the first inequality follows by the triangle inequality and the last inequality follows from the convexity of $r$.

The gradient error term in \eqref{eq:c1}, i.e., $\norm{\nfk-g_k}$, can be upper bounded as follows
\begin{align}
  \norm{\nfk-g_k} & \leq \frac{1}{m} \sum_{i=1}^m \norm{\nabla f_i(x_k) - \nabla f_i(x_{\tau_{i,k}})} \nn\\
    & \leq \frac{1}{m} \sum_{i=1}^m L_i \norm{x_k - x_{\tau_{i,k}}} \nn\\
    & \leq \frac{1}{m} \sum_{i=1}^m L_i \sum_{j=\tau_{i,k}}^{k-1} \eta \ndj \nn\\
    & \leq \eta L \sj \ndj, \label{eq:lem1}
\end{align}
where the first and third inequalities follow by the triangle inequality, the second inequality follows since each $f_i$ is $L_i$-smooth, and the last inequality follows since $\tau_{i,k} \geq k-K$. Using \eqref{eq:lem1} we can upper bound \eqref{eq:c1} as follows
\begin{align}
  F(x_{k+1}) - F(x_k) & \leq \eta \left( \eta \frac{L}{2} - 1 \right) \ndk^2 +  \eta^2 L \sj \ndj \ndk \nn\\
    & \leq \eta \left( \eta \frac{L(K+1)}{2} - 1 \right) \ndk^2 + \eta^2 \frac{L}{2} \sj \ndj^2 \nn\\
    & \leq - \frac{\eta}{2} \ndk^2 + \eta^2 \frac{L}{2} \sj \ndj^2, \label{eq:c2}
\end{align}
where the second inequality follows from the arithmetic-geometric mean inequality, i.e., $\ndj \ndk \leq \frac{1}{2} (\ndj^2 + \ndk^2)$ and the last inequality follows since $0<\eta\leq\frac{1}{L(K+1)}$. This concludes the proof of Lemma \ref{prop1}.
\end{proof}

We next introduce the following lemma, which can be viewed as an extension of \cite[Theorem 4]{TseYunBlockDescent09} into our framework with aggregated gradients. We provide a simplified proof compared to \cite{TseYunBlockDescent09} with a tighter upper bound. This lemma can be interpreted as follows. When the regularization function is zero (i.e., $r(x)=0$ for all $x\in\Real^n$) and we have access to full gradients (i.e., $K=0$), this lemma simply follows from the strong convexity of the sum function $f$ since $\nxkxs\leq\frac{1}{\mu}\norm{\nfk-\nfs}$ and $\nfs=0$ due to the optimality condition of the problem. The following lemma indicates that even though we do not have such control over the subgradients of the regularization function (as the regularization function is neither strongly convex nor smooth), the properties of the proximal step yields a similar relation at the expense of a constant of $2$ (instead of $1$ compared to the $r(x)=0$ case) and certain history dependent terms (that arise due to the incremental nature of the PIAG algorithm) that has a linear dependence in step size $\eta$. This lemma will be a key step in the proof of Lemma \ref{prop2}, where we illustrate how the descent term in Lemma \ref{prop1} relates to our Lyapunov function.

\begin{lem}\label{lem}
Suppose that Assumptions 1 and 2 hold and let $Q=L/\mu$ denote the condition number of the problem. Then, the distance of the iterates from the optimal solution is upper bounded as
\begin{equation}
  \nxkxs \leq \frac{2}{\mu} \norm{d_k} + 2 \eta Q \sj \ndj, \nn
\end{equation}
for any $k\geq0$ and $0<\eta\leq\frac{1}{L}$.
\end{lem}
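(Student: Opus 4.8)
The plan is to start from the optimality condition of the proximal step and use the strong convexity of $f$ to extract a $\mu \nxkxs$ factor, then control the gradient-error and subgradient terms that appear. Since the regularization function $r$ is only convex, the trick is to play off the subgradient $h_{k+1} \in \partial r(x_{k+1})$ against a subgradient at $x^*$ rather than trying to bound it directly. Concretely, I would write $d_k = -g_k - h_{k+1}$ and decompose $\la d_k, x_{k+1} - x^* \ra$ (or $x_k - x^*$) into a piece involving $\la \nfk - g_k, \cdot \ra$ (the aggregation error), a piece involving $\la \nfk - \nfs, x_k - x^*\ra \geq \mu \nxkxs^2$ coming from strong convexity (using $\nfs + h^* = 0$ for some $h^* \in \partial r(x^*)$ by optimality of $x^*$), and a piece involving $\la h_{k+1} - h^*, x_{k+1} - x^* \ra \geq 0$ by monotonicity of $\partial r$. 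The asymmetry between $x_k$ and $x_{k+1}$ — i.e., that $h_{k+1}$ lives at $x_{k+1}$ but we want a bound on $\nxkxs$ — is exactly what forces the constant $2$: I would convert $x_{k+1} - x^* = (x_k - x^*) + \eta d_k$ and absorb the extra $\eta \ndk^2$-type term, or equivalently use $\|x_{k+1}-x^*\|$ versus $\nxkxs$ via the update rule, and then apply Young's inequality to split cross terms.

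In more detail, the key steps in order would be: (i) from $x_{k+1} - (x_k - \eta g_k) + \eta h_{k+1} = 0$, rearrange to get $\mu \nxkxs^2 \leq \la \nfk - \nfs, x_k - x^* \ra$ and then substitute $\nfk = g_k + (\nfk - g_k)$ and $g_k = -h_{k+1} - d_k$; (ii) use $\nfs = -h^*$ and monotonicity $\la h_{k+1} - h^*, x_{k+1} - x^* \ra \geq 0$, rewriting $x_{k+1} - x^*$ via \eqref{eq:update} so that the monotonicity term contributes $-\la h_{k+1} - h^*, x_k - x^* \ra - \eta \la h_{k+1} - h^*, d_k \ra \leq 0$, which lets me replace the unwanted $h_{k+1}$ terms at the cost of $\ndk$ and $\|h_{k+1}\|$-free expressions; (iii) bound the aggregation error $\norm{\nfk - g_k} \leq \eta L \sj \ndj$ exactly as in \eqref{eq:lem1}; (iv) collect terms: after Cauchy–Schwarz we arrive at something like $\mu \nxkxs^2 \leq \nxkxs \left( 2\ndk + 2\eta L \sj \ndj \right)$ plus lower-order pieces, and dividing through by $\nxkxs$ (the case $\nxkxs = 0$ being trivial) gives the claim with $Q = L/\mu$ after dividing by $\mu$.

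The main obstacle I anticipate is handling the subgradient terms cleanly so that the final constant is exactly $2$ rather than something larger, and making sure the history-dependent term comes out as $2\eta Q \sj \ndj$ with the stated constant. The delicate point is the bookkeeping around whether to center the monotonicity inequality at $x_k$ or $x_{k+1}$: choosing $x_{k+1}$ is natural because that is where $h_{k+1}$ is a genuine subgradient, but then I must pay for the discrepancy $\eta d_k$ between $x_{k+1}$ and $x_k$, and I need the step-size restriction $\eta \leq 1/L$ precisely to keep this correction term from overwhelming the $\mu \nxkxs^2$ term (this is presumably where $\eta \leq 1/L$ enters, in contrast to the $\eta \leq 1/(L(K+1))$ needed in Lemma \ref{prop1}). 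A secondary technical point is that since we do not assume the individual $f_i$ are convex, all convexity/monotonicity arguments must be applied only to the aggregate $f$ and to $r$, never componentwise — but this is automatic here since $g_k$ appears only through the error bound \eqref{eq:lem1}, which uses only Lipschitzness of the $\nabla f_i$.
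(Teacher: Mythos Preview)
Your approach is sound and will work, but it is genuinely different from the paper's argument. The paper does \emph{not} invoke optimality of $x^*$ via a subgradient $h^*\in\partial r(x^*)$ and monotonicity of $\partial r$. Instead it introduces an auxiliary ``full-gradient'' direction
\[
  d_k' \;\triangleq\; \argmin_{d}\Bigl\{\tfrac{\eta}{2}\norm{\nabla f(x_k)+d}^2 + r(x_k+\eta d)\Bigr\},
\]
so that $x_k+\eta d_k' = \prox_r^\eta(x_k-\eta\nabla f(x_k))$, and then applies the \emph{firm non-expansiveness} of $\prox_r^\eta$ at the pair $(x_k-\eta\nabla f(x_k),\,x^*-\eta\nabla f(x^*))$. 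This, together with $L$-smoothness and $\eta\le 1/L$, gives directly $\mu\nxkxs \le 2\norm{d_k'}$. A second use of (ordinary) non-expansiveness of the prox then yields $\norm{d_k'-d_k}\le\norm{g_k-\nabla f(x_k)}\le \eta L\sj\ndj$, and the lemma follows by the triangle inequality. So the paper's proof is a clean two-step decoupling: first pretend the gradient is exact, then pay for the aggregation error via non-expansiveness.

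Your route via $h^*$ and monotonicity of $\partial r$ arrives at the same place, but be aware of one concrete bookkeeping point you glossed over: after Cauchy--Schwarz you will have, besides terms of the form $\nxkxs\cdot(\cdots)$, a residual cross term $\eta\norm{\nabla f(x_k)-g_k}\,\ndk$ that carries no factor of $\nxkxs$ and therefore does not disappear upon dividing by $\nxkxs$. This is precisely your ``lower-order pieces,'' and it does need an argument: e.g., if $\eta\ndk\le\nxkxs$ the cross term is dominated by $\norm{\nabla f(x_k)-g_k}\,\nxkxs$ and folds into the history term with total coefficient $\le 2$; if $\eta\ndk>\nxkxs$ then $\nxkxs<\eta\ndk\le(1/\mu)\ndk$ and the conclusion is immediate. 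With that patch your proof goes through with the stated constants. The paper's $d_k'$ trick avoids this residual term entirely, which is what makes its endgame cleaner.
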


\begin{proof}
Define
\begin{equation}
  d_k' \triangleq \argmin_{d\in\Real^n} \left\{ \frac{\eta}{2} \norm{\nfk+d}^2 + r(x_k+\eta d) \right\}, \nn
\end{equation}
as the direction of update with the full gradient. Non-expansiveness property of the proximal map implies
\begin{equation}
  \norm{\prox_r^\eta(x)-\prox_r^\eta(y)}^2 \leq \la \prox_r^\eta(x)-\prox_r^\eta(y), x-y\ra. \nn
\end{equation}
Putting $x=x_k-\eta\nfk$ and $y=x^*-\eta\nfs$ in the above inequality, we obtain
\begin{align}
  \norm{x_k+\eta d_k'-x^*}^2 & \leq \la x_k+\eta d_k'-x^*, \, x_k-\eta\nfk-x^*+\eta\nfs\ra \nn\\
    & \leq \la x_k+\eta d_k'-x^*, \, x_k+\eta d_k'-x^*\ra + \la x_k+\eta d_k'-x^*, \, -\eta d_k'+\eta\nfs-\eta\nfk\ra, \nn
\end{align}
which implies
\begin{equation}
  0 \leq \la x_k+\eta d_k'-x^*, \, -d_k'+\nfs-\nfk \ra. \nn
\end{equation}
This inequality can be rewritten as follows
\begin{align}
  \la x_k-x^*, \, \nfk-\nfs \ra & \leq \la x_k-x^*, \, -d_k' \ra - \eta \norm{d_k'}^2 + \eta \la d_k', \, \nfs-\nfk \ra \nn\\
    & \leq \la x_k-x^*, \, -d_k' \ra + \eta \la d_k', \, \nfs-\nfk \ra \nn\\
    & \leq \norm{d_k'} \left( \norm{x_k-x^*} + \eta \norm{\nfs-\nfk} \right) \nn\\
    & \leq \norm{d_k'} \left( \norm{x_k-x^*} + \eta L \norm{x_k-x^*} \right) \nn\\
    & \leq 2 \norm{d_k'} \norm{x_k-x^*}, \label{eq:app1}
\end{align}
where the second inequality follows since $-\norm{d_k'}^2\leq0$, the third inequality follows by the Cauchy-Schwarz inequality, the fourth inequality follows from the $L$-smoothness of $f$, and the last inequality follows since $\eta\leq\frac{1}{L}$. Since $\mu$-strong convexity of $f$ implies
\begin{equation}
  \mu\norm{x_k-x^*}^2 \leq \la x_k-x^*, \, \nfk-\nfs \ra, \label{eq:app2}
\end{equation}
then combining \eqref{eq:app1} and \eqref{eq:app2}, we obtain
\begin{equation}
  \mu\norm{x_k-x^*} \leq 2 \norm{d_k'}. \label{eq:app3}
\end{equation}
In order to relate $d_k'$ to the original direction of update $d_k$, we use the triangle inequality and write
\begin{align}
  \norm{d_k'} & \leq \norm{d_k} + \norm{d_k'-d_k} \nn\\
    & = \norm{d_k} + \frac{1}{\eta} \norm{x_k+\eta d_k'-x_k-\eta d_k} \nn\\
    & = \norm{d_k} + \frac{1}{\eta} \norm{\prox_r^\eta(x_k-\eta\nfk) - \prox_r^\eta(x_k-\eta g_k)} \nn\\
    & \leq \norm{d_k} + \norm{g_k-\nfk}, \nn\\
    & \leq \norm{d_k} + \eta L \sj \ndj, \label{eq:app4}
\end{align}
where the last line follows by equation \eqref{eq:lem1}. Putting \eqref{eq:app4} back into \eqref{eq:app3} concludes the proof of Lemma \ref{lem}.
\end{proof}

In the following lemma, we relate the direction of update to the suboptimality in the objective value at a given iteration $k$. In particular, we show that the descent term presented in Lemma \ref{prop1} (i.e., $-\ndk^2$) can be upper bounded by the negative of the suboptimality in the objective value of the next iteration (i.e., $-F_{k+1}$) and additional history dependent terms that arise due to the incremental nature of the PIAG algorithm.

\begin{lem}\label{prop2}
Suppose that Assumptions 1 and 2 hold. Then, for any $0<\eta\leq\frac{1}{L(K+1)}$, the PIAG algorithm in \eqref{eq:updaterule} yields the following guarantee
\begin{equation}
  -\ndk^2 \leq - \frac{\mu}{4} F_{k+1} + \eta L \sj \ndj^2. \nn
\end{equation}
\end{lem}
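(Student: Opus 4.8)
The plan is to use the subgradient inequality for $F$ at $x_{k+1}$ and to show that both the subgradient we pick and the distance $\norm{x_{k+1}-x^*}$ are controlled by the \emph{same} expression, so that the product of the two bounds delivers the claim. Note first that, after rearranging, the assertion is equivalent to $F_{k+1}\leq\frac{4}{\mu}\ndk^2+\frac{4\eta L}{\mu}\sj\ndj^2$. Since $x_{k+1}=\prox_r^\eta(x_k-\eta g_k)$, the optimality condition produces $h_{k+1}\in\partial r(x_{k+1})$ with $d_k=-g_k-h_{k+1}$; hence $\xi_{k+1}\triangleq\nabla f(x_{k+1})+h_{k+1}\in\partial F(x_{k+1})$ and convexity of $F$ gives $F_{k+1}\leq\la\xi_{k+1},\,x_{k+1}-x^*\ra\leq\norm{\xi_{k+1}}\,\norm{x_{k+1}-x^*}$. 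Writing $\xi_{k+1}=(\nabla f(x_{k+1})-\nfk)+(\nfk-g_k)-d_k$ and using the $L$-smoothness of $f$ together with \eqref{eq:lem1}, I would obtain
\begin{equation}
  \norm{\xi_{k+1}} \leq (1+L\eta)\,\ndk + \eta L\sj\ndj. \nn
\end{equation}

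Next I would establish a matching bound on $\norm{x_{k+1}-x^*}$ by repeating the argument of Lemma \ref{lem}, but anchored at $x_{k+1}$ rather than $x_k$: applying the non-expansiveness property of $\prox_r^\eta$ (the one used in the proof of Lemma \ref{lem}) to the points $x_k-\eta g_k$, whose proximal image is $x_{k+1}$, and $x^*-\eta\nfs$, whose proximal image is $x^*$ by the optimality condition for $x^*$, and then substituting $x_k-x^*=(x_{k+1}-x^*)-\eta d_k$, yields $\la x_{k+1}-x^*,\,g_k-\nfs\ra\leq-\la x_{k+1}-x^*,\,d_k\ra$. Splitting $g_k-\nfs=(\nabla f(x_{k+1})-\nfs)-(\nabla f(x_{k+1})-g_k)$ and using $\mu$-strong convexity of $f$ on the first piece gives $\mu\norm{x_{k+1}-x^*}\leq\ndk+\norm{\nabla f(x_{k+1})-g_k}$, and bounding the last term exactly as above produces $\mu\norm{x_{k+1}-x^*}\leq(1+L\eta)\ndk+\eta L\sj\ndj$. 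Combining with the previous paragraph,
\begin{equation}
  F_{k+1} \leq \frac{1}{\mu}\Big[(1+L\eta)\,\ndk + \eta L\sj\ndj\Big]^2. \nn
\end{equation}

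It then remains to expand the square and bound each resulting term against the budget $\frac{4}{\mu}\ndk^2+\frac{4\eta L}{\mu}\sj\ndj^2$. Since $\eta\leq\frac{1}{L(K+1)}$ we have $L\eta\leq1$, so $(1+L\eta)^2\leq4$ handles the pure $\ndk^2$ term; Cauchy--Schwarz gives $\big(\sj\ndj\big)^2\leq K\sj\ndj^2$, and $\eta^2 L^2 K\leq\eta L\cdot\frac{K}{K+1}\leq\eta L$ then controls the pure history term; the cross term $2(1+L\eta)\eta L\,\ndk\sj\ndj$ is split by Young's inequality with a parameter of order $K$, its $\ndk^2$-part being absorbable precisely because $L\eta\leq\frac{1}{K+1}$ is small when $K$ is large (and the cross term is absent when $K=0$). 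Collecting the pieces gives $F_{k+1}\leq\frac{4}{\mu}\ndk^2+\frac{4\eta L}{\mu}\sj\ndj^2$, which is the claim.

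The step I expect to be the main obstacle is this last one: the factor $(1+L\eta)^2$ already exhausts the constant $4$ in front of $\ndk^2$ (the bound is tight at $K=0$), so there is no slack left there, and for $K\geq1$ the $\ndk^2$ contribution coming from the cross term must be absorbed entirely through the smallness of the step size; choosing the Young's-inequality parameter so as to respect simultaneously the $\ndk^2$ budget and the $\sj\ndj^2$ budget is the delicate point that makes the constants close.
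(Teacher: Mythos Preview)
Your approach is correct and in fact cleaner than the paper's, but it is genuinely different in structure. The paper does not bound $F_{k+1}$ directly; instead it starts from Lemma~\ref{lem} (the distance bound anchored at $x_k$, not $x_{k+1}$), multiplies by $-\ndk$ to obtain $-\ndk^2\leq-\frac{\mu}{2}\ndk\nxkxs+\dots$, replaces $\ndk\nxkxs$ by the inner product $\la d_k, x^*-x_k\ra$, and then expands this inner product via separate convexity inequalities for $f$ and $r$, the $L$-smoothness of $f$, and two further invocations of the gradient-error estimate \eqref{eq:lem1} and of Lemma~\ref{lem}. After a fairly long chain the paper reaches $-\ndk^2\leq -\frac{\mu}{2}F_{k+1}+\ndk^2+2\eta L\sj\ndj^2$, which upon division by $2$ gives the claim. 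Your route---bound $F_{k+1}$ by $\norm{\xi_{k+1}}\,\norm{x_{k+1}-x^*}$, prove that both factors share the common upper bound $(1+L\eta)\ndk+\eta L\sj\ndj$ (the first by writing $\xi_{k+1}=(\nabla f(x_{k+1})-g_k)-d_k$, the second by a firm-nonexpansiveness argument anchored at $x_{k+1}$), then square---is more symmetric and avoids the back-and-forth between $f$- and $r$-terms that the paper goes through.

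Your worry about the final step is unfounded: the constants close with room to spare once you separate the case $K=0$ (where the history sum is empty and $(1+L\eta)^2\leq4$ suffices) from $K\geq1$. For $K\geq1$ use the elementary inequality $(a+b)^2\leq\tfrac{3}{2}a^2+3b^2$; then $\tfrac{3}{2}(1+L\eta)^2\leq\tfrac{3}{2}\cdot\tfrac{9}{4}=\tfrac{27}{8}<4$ because $L\eta\leq\tfrac{1}{K+1}\leq\tfrac12$, and $3(\eta L)^2K\leq 3\eta L\cdot\tfrac{K}{K+1}<4\eta L$ handles the history term after Cauchy--Schwarz. So there is no need to tune a Young parameter carefully; a single fixed split works uniformly in $K\geq1$.
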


\begin{proof}
In order to prove this lemma, we use Lemma \ref{lem}, which can be rewritten as follows
\begin{equation}
  - \norm{d_k} \leq - \frac{\mu}{2} \nxkxs  + \eta L \sj \ndj. \nn
\end{equation}
Then, we can upper bound $-\ndk^2$ as
\begin{align}
  - \ndk^2 & \leq - \frac{\mu}{2} \ndk \nxkxs  + \eta L \sj \ndk \ndj \nn\\
    & \leq - \frac{\mu}{2} \la d_k, x^*-x_k \ra + \eta \frac{KL}{2} \ndk^2  + \eta \frac{L}{2} \sj \ndj^2, \label{eq:pop1}
\end{align}
where the last line follows by the Cauchy-Schwarz inequality and the arithmetic-geometric mean inequality. We can upper bound the inner product term in \eqref{eq:pop1} as
\begin{align}
  - \la d_k, x^*-x_k \ra & = \la g_k + h_{k+1}, x^*-x_k \ra  \nn\\
    & = \la \nfk, x^*-x_k \ra + \la h_{k+1}, x^*-x_k \ra + \la g_k - \nfk, x^*-x_k \ra  \nn\\
    & \leq f(x^*) - f(x_k) + \la h_{k+1}, x^*-x_{k+1} \ra + \eta \la h_{k+1}, d_k \ra + \la g_k - \nfk, x^*-x_k \ra  \nn\\
    & \leq f(x^*) - f(x_k) + r(x^*) - r(x_{k+1}) + \eta \la h_{k+1}, d_k \ra + \norm{g_k - \nfk} \norm{x^*-x_k},  \label{eq:pop2}
\end{align}
where the first inequality follows from the convexity of $f$ and the second inequality follows from the convexity of $r$ and the triangle inequality. The inner product term in \eqref{eq:pop2} can be upper bounded as
\begin{align}
  \eta \la h_{k+1}, d_k \ra & = -\eta \ndk^2 - \la g_k, \eta d_k \ra  \nn\\
    & = -\eta \ndk^2 + \la \nfk, - \eta d_k \ra + \la g_k - \nfk, -\eta d_k \ra  \nn\\
    & \leq -\eta \ndk^2 + \la \nfk, x_k-x_{k+1} \ra + \eta \ndk \norm{g_k - \nfk}  \nn\\
    & \leq -\eta \ndk^2 + f(x_k) - f(x_{k+1}) + \eta^2 \frac{L}{2} \ndk^2 + \eta \ndk \norm{g_k - \nfk},  \label{eq:pop3}
\end{align}
where the first inequality follows by the triangle inequality and the second inequality follows from the $L$-smoothness of $f$. Putting \eqref{eq:pop3} back in \eqref{eq:pop2}, we obtain
\begin{equation}
  - \la d_k, x^*-x_k \ra \leq -F_{k+1} + \eta \left( \eta \frac{L}{2} - 1 \right) \ndk^2 + \norm{g_k - \nfk} \left( \norm{x^*-x_k} + \eta \ndk \right). \label{eq:pop4}
\end{equation}
The final term in \eqref{eq:pop4} can be upper bounded as follows
\begin{align}
  \norm{g_k - \nfk} \left( \norm{x^*-x_k} + \eta \ndk \right) & \leq \eta L \left( \sj \ndj \right) \left( \norm{x^*-x_k} + \eta \ndk \right)  \nn\\
    & \leq \eta L \left( \sj \ndj \right) \left[ \left( \eta + \frac{2}{\mu} \right) \norm{d_k} + 2 \eta Q \sj \ndj \right], \nn
\end{align}
where the first line follows by equation \eqref{eq:lem1} and the last line follows by Lemma \ref{lem}. Using arithmetic-geometric mean inequality in the above inequality, we obtain
\begin{equation}
  \norm{g_k - \nfk} \left( \norm{x^*-x_k} + \eta \ndk \right) \leq \eta \frac{KL}{2} \left( \eta + \frac{2}{\mu} \right) \norm{d_k} + \eta \left[ \eta \frac{L}{2} + Q + 2 \eta KQL \right] \sj \ndj. \label{eq:pop5}
\end{equation}
Putting \eqref{eq:pop5} back in \eqref{eq:pop4} yields
\begin{align}
  - \la d_k, x^*-x_k \ra & \leq - F_{k+1} + \eta \left( \eta \frac{L}{2} - 1 + \frac{KL}{2} \left( \eta + \frac{2}{\mu} \right) \right) \ndk^2 + \eta \left[ \eta \frac{L}{2} + Q + 2 \eta KQL \right] \sj \ndj \nn\\
    & = - F_{k+1} + \eta \left( \eta \frac{(K+1)L}{2} - 1 + KQ \right) \ndk^2 + \eta \left[ \eta \frac{L}{2} + Q + 2 \eta KQL \right] \sj \ndj \nn\\
    & \leq - F_{k+1} + \eta \left( KQ - \frac{1}{2} \right) \ndk^2 + \eta \left( \eta \frac{L}{2} + Q + 2 \eta KQL \right) \sj \ndj, \label{eq:pop6}
\end{align}
where the last line follows since $\eta\leq\frac{1}{L(K+1)}$. Finally, using \eqref{eq:pop6} in our original inequality in \eqref{eq:pop1}, we obtain
\begin{align}
  - \ndk^2 & \leq - \frac{\mu}{2} F_{k+1} + \eta \left( \frac{KL}{2} - \frac{\mu}{4} + \frac{KL}{2} \right) \ndk^2 + \eta \left( \eta \frac{\mu L}{4} + \frac{L}{2} + \eta KL^2 + \frac{L}{2} \right) \sj \ndj^2, \nn\\
    & \leq - \frac{\mu}{2} F_{k+1} + \eta KL \ndk^2 + \eta L \left( \frac{\mu}{4} + \eta KL + 1 \right) \sj \ndj^2, \nn\\
    & \leq - \frac{\mu}{2} F_{k+1} + \eta KL \ndk^2 + \eta L \left( \eta (K+1)L + 1 \right) \sj \ndj^2, \nn\\
    & \leq - \frac{\mu}{2} F_{k+1} + \ndk^2 + 2 \eta L \sj \ndj^2, \label{eq:pop7}
\end{align}
where the second inequality follows since $\mu\geq0$, the third inequality follows since $\frac{\mu}{4}\leq L$, and the last inequality follows since $\eta\leq\frac{1}{L(K+1)}$. Rearranging the terms in \eqref{eq:pop7}, we obtain
\begin{equation}
  - \ndk^2 \leq - \frac{\mu}{4} F_{k+1} + \eta L \sj \ndj^2,
\end{equation}
which completes the proof of Lemma \ref{prop2}.
\end{proof}

In the following theorem, we derive a recursive inequality to upper bound the suboptimality at iteration $k+1$ in terms of the suboptimality in the previous $aK+1$ iterations (where $a$ is a positive integer that measures the length of history considered) and an additive remainder term. We will later show in Corollary \ref{corr1} that this remainder term can also be upper bounded in terms of the suboptimality observed in previous iterations.

\begin{thm}\label{thm1}
Suppose that Assumptions 1 and 2 hold. Then, the PIAG algorithm with step size $0<\eta\leq\frac{1}{L(K+1)}$ yields the following recursion
\begin{equation}\label{eq:rec}
  \left( 1 + \eta \frac{\mu}{8} \right) F_{k+1} \leq \left( 1- \sum_{i=1}^{a-1} \epsilon_i \right) F_k + \sum_{i=1}^{a-1} \epsilon_i \, F_{k-iK} + \eta^2 \frac{KL}{2} \, \epsilon_{a-1} \sum_{j=k-aK}^{k-1} \ndj^2,
\end{equation}
for any $k \geq aK+1$, where $a\geq2$ is an arbitrary constant and $\epsilon_i \triangleq 2\eta L \left( \eta KL \right)^{i-1}$.
\end{thm}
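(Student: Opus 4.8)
The plan is to first derive a one-step ``base'' inequality by combining Lemma~\ref{prop1} with Lemma~\ref{prop2}, and then to obtain \eqref{eq:rec} by induction on $a$, where at each stage the residual term proportional to $\sum\ndj^2$ over a window of length $aK$ is traded for a telescoping sum of suboptimalities $F_{k-aK}-F_k$ together with a longer window of $\ndj^2$'s carrying a strictly smaller constant; the constants produced this way turn out to be precisely the $\epsilon_i$. For the base inequality, scale the bound of Lemma~\ref{prop2} by $\eta/2$ to get $-\frac{\eta}{2}\ndk^2\le-\frac{\eta\mu}{8}F_{k+1}+\frac{\eta^2L}{2}\sj\ndj^2$, and substitute it into the estimate $F_{k+1}\le F_k-\frac{\eta}{2}\ndk^2+\frac{\eta^2L}{2}\sj\ndj^2$ of Lemma~\ref{prop1}; for $0<\eta\le\frac1{L(K+1)}$ and $k\ge K+1$ this gives
\begin{equation}\label{eq:plan-base}
  \Bigl(1+\eta\tfrac{\mu}{8}\Bigr)F_{k+1}\le F_k+\eta^2L\sum_{j=k-K}^{k-1}\ndj^2 ,
\end{equation}
which is the $a=1$ instance of \eqref{eq:rec} under the conventions $\sum_{i=1}^{0}\epsilon_i=0$ and $\epsilon_0=2/K$ (so that $\eta^2\frac{KL}{2}\epsilon_0=\eta^2L$), and serves as the seed of the induction.

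The technical core is a \emph{window estimate}: for every integer $m\ge1$ and every $k\ge(m+1)K+1$,
\begin{equation}\label{eq:plan-window}
  \sum_{j=k-mK}^{k-1}\ndj^2\le\frac{2}{\eta}\bigl(F_{k-mK}-F_k\bigr)+\eta KL\sum_{j=k-(m+1)K}^{k-1}\ndj^2 .
\end{equation}
To prove this I would rearrange Lemma~\ref{prop1} to $\frac{\eta}{2}\ndj^2\le F_j-F_{j+1}+\frac{\eta^2L}{2}\sum_{i=(j-K)_+}^{j-1}\ndi^2$, sum over $j=k-mK,\dots,k-1$, telescope $\sum(F_j-F_{j+1})$ to $F_{k-mK}-F_k$, and bound the resulting double sum: since $i$ ranges over $(j-K)_+\le i\le j-1$, each $\ndi^2$ with $i$ between $k-(m+1)K$ and $k-2$ is counted for at most $K$ values of $j$, so the double sum is at most $K\sum_{j=k-(m+1)K}^{k-1}\ndj^2$; multiplying through by $2/\eta$ yields \eqref{eq:plan-window}. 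The constraint $k\ge(m+1)K+1$ keeps all indices positive so the $(\cdot)_+$ truncations drop out, and where they would not they only shrink the left-hand side.

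The induction on $a$ then runs as follows. The case $a=1$ is \eqref{eq:plan-base}; in particular the case $a=2$ follows by inserting \eqref{eq:plan-window} with $m=1$ into \eqref{eq:plan-base} and using $\epsilon_1=2\eta L$. Assuming \eqref{eq:rec} holds for some $a\ge1$ and all $k\ge aK+1$, fix $k\ge(a+1)K+1$ and insert \eqref{eq:plan-window} with $m=a$ into the residual $\eta^2\frac{KL}{2}\epsilon_{a-1}\sum_{j=k-aK}^{k-1}\ndj^2$ appearing in \eqref{eq:rec}. Using the identity $\eta KL\,\epsilon_{a-1}=\epsilon_a$, which is immediate from $\epsilon_i=2\eta L(\eta KL)^{i-1}$, the term $\eta^2\frac{KL}{2}\epsilon_{a-1}\cdot\frac{2}{\eta}\bigl(F_{k-aK}-F_k\bigr)$ becomes $\epsilon_a\bigl(F_{k-aK}-F_k\bigr)$ and the leftover residual becomes $\eta^2\frac{KL}{2}\epsilon_a\sum_{j=k-(a+1)K}^{k-1}\ndj^2$; absorbing $-\epsilon_aF_k$ into the leading coefficient turns $1-\sum_{i=1}^{a-1}\epsilon_i$ into $1-\sum_{i=1}^{a}\epsilon_i$ and appends $\epsilon_aF_{k-aK}$ to the history terms. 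This is exactly \eqref{eq:rec} with $a$ replaced by $a+1$, so the induction closes.

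I expect the only genuine difficulty to be bookkeeping: getting the summation ranges and the multiplicity count in \eqref{eq:plan-window} exactly right, and checking that the geometric recursion $\eta KL\,\epsilon_{a-1}=\epsilon_a$ propagates the constants so that all coefficients in \eqref{eq:rec} line up after each substitution, while the factor $1+\eta\mu/8$ on the left is never disturbed by the trading steps.
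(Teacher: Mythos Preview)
Your proposal is correct and follows essentially the same route as the paper: combine Lemma~\ref{prop1} with Lemma~\ref{prop2} (scaled by $\eta/2$) to obtain the base inequality \eqref{eq:plan-base}, and then induct on $a$ by repeatedly substituting the rearranged form of Lemma~\ref{prop1} into the residual $\sum\ndj^2$ term, using the geometric relation $\eta KL\,\epsilon_{a-1}=\epsilon_a$. The only cosmetic difference is that you package the ``sum Lemma~\ref{prop1} over a window, telescope, and count multiplicities $\le K$'' step as a standalone estimate \eqref{eq:plan-window}, whereas the paper performs this manipulation inline at each inductive stage; also, the paper starts the induction at $a=2$ rather than introducing the $\epsilon_0$ convention for $a=1$.
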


\begin{proof}
We use induction on the constant $a$ to prove this theorem. For $a=2$, the recursion can be obtained as follows. Using Lemma \ref{prop2} in Lemma \ref{prop1} and rearranging terms, we get
\begin{equation}
  \left( 1 + \eta \frac{\mu}{8} \right) F_{k+1} \leq F_k + \eta^2 L \sum_{j=k-K}^{k-1}  \ndj^2. \label{eq:t1}
\end{equation}
Rearranging terms in Lemma \ref{prop1}, we obtain
\begin{equation}
  \norm{d_j}^2 \leq \frac{2}{\eta} \left( F_j - F_{j+1} \right) + \eta L \sum_{i=j-K}^{j-1} \ndi^2. \label{eq:t2}
\end{equation}
Putting \eqref{eq:t2} back in \eqref{eq:t1}, we get
\begin{align}
  \left( 1 + \eta \frac{\mu}{8} \right) F_{k+1} & \leq F_k + \eta^2 L  \sum_{j=k-K}^{k-1} \left( \frac{2}{\eta} \left( F_j - F_{j+1} \right) + \eta L \sum_{i=j-K}^{j-1} \ndi^2 \right) \nn\\
    & \leq F_k + 2 \eta L \left( F_{k-K} - F_k \right) + \eta^3 L^2 \sum_{j=k-K}^{k-1} \sum_{i=j-K}^{j-1} \ndi^2  \nn\\
    & \leq F_k + 2 \eta L \left( F_{k-K} - F_k \right) + \eta^3 K L^2 \sum_{j=k-2K}^{k-1} \ndi^2 \label{eq:t3}
\end{align}
Since $\epsilon_1 = 2\eta L$, then \eqref{eq:t3} can be rewritten as follows
\begin{equation}
  \left( 1 + \eta \frac{\mu}{8} \right) F_{k+1} \leq (1-\epsilon_1) F_k + \epsilon_1 F_{(k-K)_+} + \eta^2 \frac{KL}{2} \, \epsilon_1 \sum_{j=k-2K}^{k-1} \ndj^2, \label{eq:t3_2}
\end{equation}
showing \eqref{eq:rec} for $a=2$. As a part of the induction procedure, we then assume that \eqref{eq:rec} holds for some arbitrary $a\geq2$, which amounts to
\begin{equation}
  \left( 1 + \eta \frac{\mu}{8} \right) F_{k+1} \leq \left( 1- \sum_{i=1}^{a-1} \epsilon_i \right) F_k + \sum_{i=1}^{a-1} \epsilon_i \, F_{k-iK} +  \eta^2 \frac{KL}{2} \, \epsilon_{a-1} \sum_{j=k-aK}^{k-1} \ndj^2. \nn
\end{equation}
Using \eqref{eq:t2} in the above inequality, we obtain
\begin{align}
  \left( 1 + \eta \frac{\mu}{8} \right) F_{k+1} & \leq \left( 1- \sum_{i=1}^{a-1} \epsilon_i \right) F_k + \sum_{i=1}^{a-1} \epsilon_i \, F_{k-iK} + \eta^2 \frac{KL}{2} \, \epsilon_{a-1} \nn\\
    & \hspace{3cm} \times \sum_{j=k-aK}^{k-1} \left( \frac{2}{\eta} \left( F_j - F_{j+1} \right) + \eta L \sum_{i=j-K}^{j-1} \ndi^2 \right)  \nn\\
    & \hspace{-1.5cm} = \left( 1- \sum_{i=1}^{a-1} \epsilon_i \right) F_k + \sum_{i=1}^{a-1} \epsilon_i \, F_{k-iK} + \epsilon_a \left( F_{(k-aK)_+} - F_k \right) + \eta^2 \frac{L}{2} \, \epsilon_a \sum_{j=k-aK}^{k-1} \sum_{i=j-K}^{j-1} \ndj^2 \nn\\
    & \hspace{-1.5cm} \leq \left( 1- \sum_{i=1}^a \epsilon_i \right) F_k + \sum_{i=1}^a \epsilon_i \, F_{k-iK} + \eta^2 \frac{KL}{2} \, \epsilon_a \sum_{j=k-(a+1)K}^{k-1} \ndj^2. \nn
\end{align}
Therefore, \eqref{eq:rec} holds for $a+1$ as well, which concludes the proof of Theorem \ref{thm1}.
\end{proof}

\begin{corr}\label{corr1}
Suppose that Assumptions 1 and 2 hold. Then, the PIAG algorithm with step size $0<\eta\leq\frac{1}{L(K+1)}$ yields the following recursion
\begin{equation}\label{eq:lyap_rec}
  \left( 1 + \eta \frac{\mu}{8} \right) F_{k+1} \leq \left( 1- \sum_{i=1}^{a-1} \epsilon_i \right) F_k + \sum_{i=1}^{a-1} \epsilon_i \, F_{k-iK} + 4KQ \, \epsilon_{a-1} \sum_{j=k-aK}^k F_j,
\end{equation}
for any integer $a\geq2$ and $k \geq aK+1$, where $\epsilon_i \triangleq 2\eta L \left( \eta KL \right)^{i-1}$.
\end{corr}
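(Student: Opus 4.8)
The goal is to replace the trailing term $\eta^2\frac{KL}{2}\,\epsilon_{a-1}\sum_{j=k-aK}^{k-1}\ndj^2$ in the recursion \eqref{eq:rec} of Theorem \ref{thm1} by a term involving only the Lyapunov values $F_j$, so that \eqref{eq:rec} becomes a genuine recursion in $\{F_j\}$. The only ingredient needed is a pointwise bound of the form $\ndj^2 \leq c\,(F_j + F_{j+1})$ with an explicit constant $c$; everything else is summation and matching of constants.

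To obtain such a bound I would use that, as already noted after Assumption \ref{asmp:conv}, the function $F=f+r$ is $\mu$-strongly convex. Since $x^*$ minimizes $F$ we have $0\in\partial F(x^*)$, so the strong convexity (quadratic growth) inequality gives $\frac{\mu}{2}\nxkxs^2 \leq F(x_k)-F(x^*) = F_k$, i.e. $\nxkxs^2 \leq \frac{2}{\mu}F_k$ for every $k\geq 0$. Combining this with the update rule $x_{k+1}=x_k+\eta d_k$, so that $\eta d_j = x_{j+1}-x_j$, the triangle inequality and the elementary estimate $(p+q)^2 \leq 2p^2+2q^2$ yield $\eta^2\ndj^2 \leq 2\norm{x_{j+1}-x^*}^2 + 2\norm{x_j-x^*}^2 \leq \frac{4}{\mu}\left(F_{j+1}+F_j\right)$, hence $\ndj^2 \leq \frac{4}{\eta^2\mu}\left(F_j+F_{j+1}\right)$ for all $j\geq 0$.

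Summing this over $j$ from $k-aK$ to $k-1$ (all these indices are nonnegative because $k\geq aK+1$) and shifting the index in the $F_{j+1}$ contributions gives
\begin{equation}
  \sum_{j=k-aK}^{k-1}\ndj^2 \;\leq\; \frac{4}{\eta^2\mu}\left( \sum_{j=k-aK}^{k-1} F_j + \sum_{j=k-aK+1}^{k} F_j \right) \;\leq\; \frac{8}{\eta^2\mu}\sum_{j=k-aK}^{k} F_j, \nn
\end{equation}
where the last step uses $F_j\geq 0$. Multiplying through by $\eta^2\frac{KL}{2}\,\epsilon_{a-1}$ and recalling $Q=L/\mu$, the trailing term of \eqref{eq:rec} is bounded by $\eta^2\frac{KL}{2}\,\epsilon_{a-1}\cdot\frac{8}{\eta^2\mu}\sum_{j=k-aK}^{k}F_j = 4KQ\,\epsilon_{a-1}\sum_{j=k-aK}^{k}F_j$; substituting this into \eqref{eq:rec} produces exactly \eqref{eq:lyap_rec}.

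I do not expect a substantive obstacle here: the one observation that makes it work is that $F$ itself, not merely $f$, satisfies the quadratic growth bound, which lets the displacement $x_{j+1}-x_j$ (hence $d_j$) be controlled by two consecutive Lyapunov values; the remainder is the bookkeeping of a telescoped/shifted sum and checking that the constants line up to give $4KQ$. The only point requiring a little care is the mismatch of index ranges — the sum on the left of \eqref{eq:rec} stops at $k-1$ while the one on the right of \eqref{eq:lyap_rec} runs to $k$ — and this extra slack, together with $F_j\geq 0$, is precisely what absorbs the shifted $F_{j+1}$ terms.
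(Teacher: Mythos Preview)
Your proposal is correct and follows essentially the same route as the paper: bound $\eta^2\ndj^2=\norm{x_{j+1}-x_j}^2$ via the triangle inequality and $(p+q)^2\leq 2p^2+2q^2$, then use the $\mu$-strong convexity (quadratic growth) of $F$ to get $\norm{x_j-x^*}^2\leq\frac{2}{\mu}F_j$, arriving at $\eta^2\ndj^2\leq\frac{4}{\mu}(F_j+F_{j+1})$ before substituting into \eqref{eq:rec}. The only cosmetic difference is that the paper merges $\sum_{j=k-aK}^{k-1}(F_{j+1}+F_j)$ directly into a single sum, whereas you split it into two shifted sums and then recombine; the constants and index handling are identical.
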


\begin{proof}
Using Assumption \ref{asmp:conv}, we obtain
\begin{align}
  \eta^2 \ndj^2 & = \norm{x_{j+1}-x_j}^2 \nn\\
    & \leq \left( \norm{x_{j+1}-x^*} + \norm{x_j-x^*} \right)^2 \nn\\
    & \leq 2 \left( \norm{x_{j+1}-x^*}^2 + \norm{x_j-x^*}^2 \right) \nn\\
    & \leq \frac{4}{\mu} \left( F_{j+1} + F_j \right), \label{eq:t5}
\end{align}
where the first inequality follows from the triangle inequality, the second inequality follows from the arithmetic-geometric mean inequality, and the last inequality follows from the $\mu$-strong convexity of the cost function. Using \eqref{eq:t5} in \eqref{eq:rec} leads to
\begin{align}
  \left( 1 + \eta \frac{8}{81Q^2L} \right) F_{k+1} & \leq \left( 1- \sum_{i=1}^{a-1} \epsilon_i \right) F_k + \sum_{i=1}^{a-1} \epsilon_i \, F_{k-iK} + 2 K \frac{L}{\mu} \, \epsilon_{a-1} \sum_{j=k-aK}^{k-1} \left( F_{j+1} + F_j \right) \nn\\
    & = \left( 1- \sum_{i=1}^{a-1} \epsilon_i \right) F_k + \sum_{i=1}^{a-1} \epsilon_i \, F_{k-iK} + 4 KQ \, \epsilon_{a-1} \sum_{j=k-aK}^k F_j, \nn
\end{align}
which completes the proof.
\end{proof}

Before presenting the main result of the paper, we first introduce the following lemma, which was presented in \cite[Lemma 3]{feyzmahdavian2014delayed} in a slightly different form.

\begin{lem}\label{mlsp}
Let $\{Z_k\}$ be a sequence of non-negative real numbers satisfying
\begin{equation}
  Z_{k+1} \leq p \, Z_k + \sum_{j=k-A}^k q_j Z_j, \nn
\end{equation}
for any $k\geq0$ for some non-negative constants $p$, $q_j$ and $A$. If $r \triangleq p+\sum_{j=k-A}^k q_j < 1$ holds, then
\begin{equation}
  Z_k \leq r^{\frac{k+1}{A+1}-1} \left( \max_{0 \leq j \leq A} Z_j \right),
\end{equation}
for any $k\geq A+1$.
\end{lem}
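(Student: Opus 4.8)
The plan is to iterate the given recursion for $Z_{k+1}$ over a block of $A+1$ consecutive indices and bound the ``max over a window'' by a single max, thereby obtaining a genuine geometric decay of the windowed maximum. Concretely, define $M_\ell \triangleq \max_{\ell A \le j \le (\ell+1)A} Z_j$ (or some similarly aligned block-maximum over $A+1$ consecutive terms). The recursion says each $Z_{k+1}$ is a convex-type combination, with total coefficient $r<1$, of the values $Z_k, Z_{k-1},\dots,Z_{k-A}$; hence $Z_{k+1} \le r \max_{k-A \le j \le k} Z_j$. The key structural observation is that for $k+1$ ranging over a full block of length $A+1$, all the indices $j$ appearing on the right-hand side lie in the union of the current block and the immediately preceding block, so $Z_{k+1} \le r\, \max\{M_{\ell-1}, M_\ell\}$. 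A short additional argument (processing the indices of the block in increasing order and using that later entries of the block are already controlled by $r\,M_{\ell-1}$ once $r<1$) upgrades this to $M_\ell \le r\, M_{\ell-1}$, i.e. the block-maxima contract by a factor $r$ each block.

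First I would state precisely the block decomposition and verify the base case: for $0 \le j \le A$ we trivially have $Z_j \le \max_{0\le i\le A} Z_i =: M_0$. Next I would carry out the inductive step showing $M_\ell \le r\,M_{\ell-1}$ for all $\ell \ge 1$: take any $k+1$ with $\ell A < k+1 \le (\ell+1)A$ — wait, more carefully, any $k+1$ in the $\ell$-th block — apply the recursion, observe $k-A \ge (\ell-1)A$ so every index on the right sits in blocks $\ell-1$ or $\ell$, bound those terms by $M_{\ell-1}$ and by the partial block-max built so far, and use $p + \sum q_j = r$ to collect coefficients; an easy ordering/induction-within-the-block argument then gives $Z_{k+1} \le r\, M_{\ell-1}$ for every element of block $\ell$. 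Iterating, $M_\ell \le r^\ell M_0$. Finally, for a given $k \ge A+1$, I would locate the block containing $k$: if $k$ lies in block $\ell$ then $\ell A \le k$, so $\ell \ge k/(A+1)$ (using $\ell(A+1) \ge \ell A + A \ge k$ when $\ell\ge1$, or more simply $\ell = \lfloor k/A \rfloor$ type bookkeeping), and therefore $Z_k \le M_\ell \le r^\ell M_0 \le r^{\frac{k+1}{A+1}-1} M_0$, which is exactly the claimed bound after absorbing the off-by-one in the exponent into the ``$-1$''.

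The main obstacle is the bookkeeping in the inductive step — making the ``all right-hand-side indices lie in the previous two blocks'' claim airtight and handling the within-block ordering so that one cleanly gets the factor $r$ (rather than, say, $r$ times a max that still involves block $\ell$ itself). This requires being careful that the window length on the right is exactly $A+1$, matching the block length, and that the first element of each new block only reaches back into the previous block. Once that alignment is fixed, collecting the coefficients via $p+\sum_j q_j = r < 1$ is immediate, and the only remaining subtlety is translating the block index $\ell$ into the exponent $\frac{k+1}{A+1}-1$, which is a routine inequality on floors.
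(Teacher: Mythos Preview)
The paper does not actually prove this lemma; it simply cites \cite[Lemma~3]{feyzmahdavian2014delayed} and moves on. So there is no ``paper's own proof'' to compare against. That said, your proposed argument is correct and is precisely the standard proof of such delayed-contraction results: bound $Z_{k+1}$ by $r$ times the maximum over the preceding window of length $A+1$, then propagate this through disjoint blocks of length $A+1$ using an inner induction within each block.

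Two small cleanups would make your write-up airtight. First, your block definition $M_\ell = \max_{\ell A \le j \le (\ell+1)A} Z_j$ gives \emph{overlapping} blocks; the argument is cleanest with disjoint blocks $B_\ell = \{\ell(A+1),\dots,(\ell+1)(A+1)-1\}$, so that for the first index $k+1=\ell(A+1)$ of block $\ell$ the window $\{k-A,\dots,k\}$ is exactly $B_{\ell-1}$, and for later indices in $B_\ell$ the window mixes $B_{\ell-1}$ with already-processed elements of $B_\ell$, each of which is $\le r\,M_{\ell-1}\le M_{\ell-1}$ by the inner induction. This yields $M_\ell \le r\,M_{\ell-1}$ cleanly. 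Second, the exponent bookkeeping is then immediate: $k\in B_\ell$ gives $\ell=\lfloor k/(A+1)\rfloor \ge (k-A)/(A+1) = (k+1)/(A+1)-1$, so $Z_k \le M_\ell \le r^\ell M_0 \le r^{(k+1)/(A+1)-1} M_0$, which is the claimed bound.
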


We next present the main theorem of the paper, which characterizes the linear convergence rate of the PIAG algorithm.

\begin{thm}\label{thm2}
Suppose that Assumptions 1 and 2 hold. Then, for any integer $a\geq3$, the PIAG algorithm with step size $0<\eta\leq\frac{1}{3L(K+1)} \overline{\eta}_a$ where
\begin{equation}
  \overline{\eta}_a \triangleq \left( \frac{1}{144(aK+1)KQ^2} \right)^{\frac{1}{a-2}}, \nn
\end{equation}
is linearly convergent satisfying
\begin{equation}
  F_k \leq \left( 1 - \eta \frac{\mu}{18} \right)^{\frac{k+1}{aK+1}-1} \left( \max_{0 \leq j \leq aK} F_j \right), \label{eq:thm2}
\end{equation}
for any $k \geq aK+1$.
\end{thm}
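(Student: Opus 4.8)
The plan is to feed the recursion of Corollary~\ref{corr1} into Lemma~\ref{mlsp} with $Z_k=F_k$ and $A=aK$, so everything reduces to verifying that the resulting contraction factor $r$ satisfies $r\le 1-\eta\frac{\mu}{18}<1$ under the stated step size. First I would record that $F_k\ge 0$ for all $k$ (since $x^*$ is the unique minimizer of $F$), divide \eqref{eq:lyap_rec} by $1+\eta\frac{\mu}{8}>0$, and observe that each index $k-iK$ with $1\le i\le a-1$ lies in the window $\{k-aK,\dots,k\}$, so the inequality can be written as $Z_{k+1}\le p\,Z_k+\sum_{j=k-aK}^{k} q_j Z_j$ with $p=\bigl(1-\sum_{i=1}^{a-1}\epsilon_i\bigr)/(1+\eta\frac{\mu}{8})$ and the $q_j$ collecting the $\epsilon_i$ and the $4KQ\,\epsilon_{a-1}$ terms, all divided by $1+\eta\frac{\mu}{8}$. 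Nonnegativity of the $q_j$ is immediate; nonnegativity of $p$ needs $\sum_{i=1}^{a-1}\epsilon_i\le 1$, which follows by bounding the geometric sum $\sum_{i\ge 1}(\eta KL)^{i-1}\le \frac{1}{1-\eta KL}$ using $\eta KL\le\frac13\overline{\eta}_a\le\frac13$ (here I use $\overline{\eta}_a\le 1$, valid since $144(aK+1)KQ^2\ge 1$; the degenerate $K=0$ case collapses the recursion to $F_{k+1}\le F_k/(1+\eta\frac{\mu}{8})$ and is immediate).

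Next I would compute $r\triangleq p+\sum_{j=k-aK}^k q_j$. The $F_k$- and $F_{k-iK}$-coefficients in \eqref{eq:lyap_rec} sum to exactly $1$, and the remainder term contributes $aK+1$ copies of $4KQ\,\epsilon_{a-1}$, so $r=\dfrac{1+4KQ(aK+1)\epsilon_{a-1}}{1+\eta\frac{\mu}{8}}$. The crux is the estimate $4KQ(aK+1)\epsilon_{a-1}\le \eta\frac{\mu}{18}$: substituting $\epsilon_{a-1}=2\eta L(\eta KL)^{a-2}$, using $\eta KL\le\frac13\overline{\eta}_a$ together with the defining identity $\overline{\eta}_a^{\,a-2}=\frac{1}{144(aK+1)KQ^2}$ and $L/Q=\mu$, the factors $KQ(aK+1)$ cancel and one is left with $4KQ(aK+1)\epsilon_{a-1}\le \eta\frac{\mu}{18}\,3^{-(a-2)}\le \eta\frac{\mu}{18}$ for $a\ge 3$. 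This is exactly the point where the somewhat unusual exponent $\frac{1}{a-2}$ in $\overline{\eta}_a$ earns its keep.

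Then I would convert this into $r\le 1-\eta\frac{\mu}{18}$. From $\eta\le\frac{1}{3L(K+1)}\overline{\eta}_a$ one gets $\eta\mu\le\eta L\le\frac{1}{3(K+1)}\le\frac13$, and hence $\bigl(1+\eta\frac{\mu}{8}\bigr)\bigl(1-\eta\frac{\mu}{18}\bigr)=1+\frac{\eta\mu(10-\eta\mu)}{144}\ge 1+\eta\frac{\mu}{18}\ge 1+4KQ(aK+1)\epsilon_{a-1}$, which gives $r\le 1-\eta\frac{\mu}{18}<1$; the same bound $\eta L\le\frac{1}{3(K+1)}$ also shows $\eta\le\frac{1}{L(K+1)}$, so Corollary~\ref{corr1} genuinely applies. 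Finally, Lemma~\ref{mlsp} with $A=aK$ yields $F_k\le r^{\frac{k+1}{aK+1}-1}\bigl(\max_{0\le j\le aK}F_j\bigr)$ for $k\ge aK+1$; since the exponent is nonnegative there and $0\le r\le 1-\eta\frac{\mu}{18}\le 1$, monotonicity of $t\mapsto t^{\text{(nonneg.\ exp.)}}$ lets me replace $r$ by $1-\eta\frac{\mu}{18}$, which is precisely \eqref{eq:thm2}.

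I expect the main obstacle to be purely the constant-chasing in the middle two steps: calibrating $\overline{\eta}_a$ so that $4KQ(aK+1)\epsilon_{a-1}$ is not merely $o(1)$ but is dominated by $\eta\frac{\mu}{8}$ with enough slack to be absorbed into $1-\eta\frac{\mu}{18}$, and simultaneously checking that every coefficient handed to Lemma~\ref{mlsp} is nonnegative and that the summation windows in \eqref{eq:lyap_rec} and in Lemma~\ref{mlsp} align. Each individual estimate is elementary (geometric series, arithmetic--geometric mean, $Q\ge1$), but the bookkeeping has to be done with care.
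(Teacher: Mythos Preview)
Your proposal is correct and follows essentially the same route as the paper: divide the recursion of Corollary~\ref{corr1} by $1+\eta\frac{\mu}{8}$, apply Lemma~\ref{mlsp} with $A=aK$, verify nonnegativity of all coefficients via the geometric-series bound $\sum_{i=1}^{a-1}\epsilon_i\le 3\eta L\le 1$, compute $r=\bigl(1+4KQ(aK+1)\epsilon_{a-1}\bigr)/(1+\eta\frac{\mu}{8})$, and show $r\le 1-\eta\frac{\mu}{18}$. The only cosmetic difference is in this last step: the paper bounds the denominator by $1+\eta\frac{\mu}{8}\le\tfrac{9}{8}$ to get $\kappa\le 1-\eta\frac{\mu}{9}+[\text{error}]$ and then shows the error is at most $\eta\frac{\mu}{18}$, whereas you multiply out $(1+\eta\frac{\mu}{8})(1-\eta\frac{\mu}{18})$ directly; both are equivalent elementary manipulations. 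Your explicit treatment of the $K=0$ edge case (where $\overline{\eta}_a$ is undefined but the recursion trivializes) is a useful addition that the paper handles only implicitly.
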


This theorem implies that the exact linear convergence rate depends on the parameter $a$, which measures the length of the history considered in Theorem \ref{thm1}. We will show that increasing $a$ will allow us to establish a linear convergence rate with better condition number $Q$ and aggregation history $K$ dependence. In particular, as $a\to\infty$, we have $\overline{\eta}_a\to1$ and we can pick a step size of $\eta=\frac{1}{3L(K+1)}$, which yields a linear convergence rate of $1-\order(1/(QK^2))$. However, this rate in \eqref{eq:thm2} is only achievable after the first $aK+1$ iterations. Therefore, picking larger step sizes yields a faster convergence rate but at the expense of a larger number of iterations to achieve that rate. We address this tradeoff in Corollary \ref{corr2} by showing the iteration complexity of the PIAG algorithm to achieve an $\epsilon$-optimal solution.

\begin{proof}
We will apply Lemma \ref{mlsp} to Corollary \ref{corr1} with $Z_k=F_k$, for which we require
\begin{itemize}
  \item [$(i)$] $0<\eta\leq\frac{1}{L(K+1)}$,
  \item [$(ii)$] $\epsilon_i \geq 0$ for all $i=1,\dots,a-1$,
  \item [$(iii)$] $1- \sum_{i=1}^{a-1} \epsilon_i \geq 0$, and
  \item [$(iv)$] $1 + \eta \frac{\mu}{8} > 1 + 4KQ (aK+1) \epsilon_{a-1}$.
\end{itemize}
The first and second conditions $(i)-(ii)$ are trivially satisfied as $0<\eta\leq\frac{1}{3L(K+1)} \overline{\eta}_a$ with $0<\overline{\eta}_a<1$, for any $a\in\Real$. In order to show that the third condition $(iii)$ holds, we use the definition of $\epsilon_i$, which implies
\begin{equation}
  \sum_{i=1}^{a-1} \epsilon_i = 2\eta L \sum_{i=1}^{a-1} \left( \eta KL \right)^{i-1} < 2\eta L \sum_{i=0}^\infty \left( \eta KL  \right)^i = 2\eta L \left( \frac{1}{1-\eta KL} \right), \label{eq:slem1}
\end{equation}
where the last equality follows since $\eta KL \leq \frac{K}{3(K+1)} \overline{\eta}_a <\frac{1}{3}$ as $\eta\leq\frac{1}{3L(K+1)} \overline{\eta}_a$ and $\overline{\eta}_a<1$. Then, we can upper bound the right hand side of \eqref{eq:slem1} as follows
\begin{equation}
  \sum_{i=1}^{a-1} \epsilon_i < 2 \eta L \left( \frac{1}{1-\frac{1}{3}} \right) = 3 \eta L \leq 1, \label{eq:slem2}
\end{equation}
where the inequalities follow since $\eta \leq \frac{1}{3L(K+1)} \overline{\eta}_a$ with $K\geq0$ and $\overline{\eta}_a<1$. The inequality \eqref{eq:slem2} implies $1- \sum_{i=1}^{a-1} \epsilon_i > 0$, therefore the third condition $(iii)$ holds as well. Using the definition of $\epsilon_{a-1}$, the fourth condition $(iv)$ can be written as follows
\begin{equation}\label{eq:const1}
  1 + \eta \frac{\mu}{8} > 1 + 8(aK+1)Q \left( \eta KL  \right)^{a-1},
\end{equation}
which can be equivalently expressed as
\begin{equation}\label{eq:const3}
  \eta < \frac{1}{KL} \left( \frac{1}{64(aK+1)KQ^2} \right)^{\frac{1}{a-2}},
\end{equation}
for any $a\geq3$ and $K>0$. We can trivially observe that for any $0<\eta\leq\frac{1}{3L(K+1)} \overline{\eta}_a$, the above inequality is satisfied, hence the fourth condition $(iv)$ holds as well.

Therefore, we can apply Lemma \ref{mlsp} to Corollary \ref{corr1}. This yields that for any step size $0<\eta\leq\frac{1}{3L(K+1)} \overline{\eta}_a$ with $a\geq3$, we obtain a global linear convergence
\begin{equation}
  F_k \leq \kappa^{\frac{k+1}{aK+1}-1} \max_{0\leq j\leq aK} F_j, \label{eq:final1}
\end{equation}
where
\begin{align}
  \kappa & = \frac{1 + 8(aK+1)Q \left( \eta KL  \right)^{a-1}}{1 + \eta \frac{\mu}{8}} \nn\\
    & = 1 - \eta \frac{\frac{\mu}{8} - 8(aK+1)KQL \left( \eta KL \right)^{a-2}}{1 + \eta \frac{\mu}{8}}. \nn
\end{align}
As $1<1 + \eta \frac{\mu}{8}\leq\frac{9}{8}$, we then have
\begin{equation}
  \kappa \leq 1 - \eta \frac{\mu}{9} + 8\eta(aK+1)KQL \left( \eta KL \right)^{a-2}. \label{eq:pot1}
\end{equation}
Noting that $\eta\leq\frac{1}{3L(K+1)} \overline{\eta}_a < \frac{1}{LK} \overline{\eta}_a$, we obtain $\left( \eta KL \right)^{a-2} < (\overline{\eta}_a)^{a-2} = \frac{1}{144(aK+1)KQ^2}$. Using this inequality in \eqref{eq:pot1}, we obtain
\begin{align}
  \kappa & < 1 - \eta \frac{\mu}{9} + 8\eta(aK+1)KQL \frac{1}{144(aK+1)KQ^2} \nn\\
    & \leq 1 - \eta \frac{\mu}{9} + \eta \frac{\mu}{18} \nn\\
    & \leq 1 - \eta \frac{\mu}{18}, \label{eq:const5}
\end{align}
which concludes the proof of Theorem \ref{thm2}.
\end{proof}

We next introduce the following corollary, which highlights the main result of the paper. This corollary indicates that using a step size of $\order(1/(KL\log(QK)))$, the PIAG algorithm is guaranteed to return an $\epsilon$-optimal solution after $\order(QK^2\log^2(QK)\log(1/\epsilon))$ iterations, or equivalently after $\torder(QK^2\log(1/\epsilon))$ iterations, where the tilde is used to hide the logarithmic terms in $Q$ and $K$.

\begin{corr}\label{corr2}
Suppose that Assumptions 1 and 2 hold. Then, the PIAG algorithm in \eqref{eq:updaterule} with step size $\eta = \frac{1}{3L(K+1)}\widetilde{\eta}_a$, where $\widetilde{\eta}_a \triangleq \frac{1}{a} \left( \frac{1}{12(K+1)Q} \right)^\frac{2}{a-2}$ and $a=\lceil\log(12(K+1)Q)\rceil+2$, is guaranteed to return an $\epsilon$-optimal solution after
\begin{equation}
  k \geq Ma^2(K+1)^2Q \log(c/\epsilon) + aK \label{eq:iteration_comp}
\end{equation}
iterations, where $M\triangleq54e^2$ and $c = \max_{0 \leq j \leq aK} F_j$ denotes the initial suboptimality in the function values.
\end{corr}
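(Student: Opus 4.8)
The plan is to specialize Theorem \ref{thm2} to the particular choice $a = \lceil\log(12(K+1)Q)\rceil+2$ and show that the resulting step size $\eta = \frac{1}{3L(K+1)}\widetilde\eta_a$ is admissible (i.e. $\widetilde\eta_a \le \overline\eta_a$), then read off the iteration count from the linear rate in \eqref{eq:thm2}. First I would verify that $\widetilde\eta_a \le \overline\eta_a$, so that Theorem \ref{thm2} applies with this $\eta$. Since $\overline\eta_a = \big(144(aK+1)KQ^2\big)^{-1/(a-2)}$ and $144(aK+1)KQ^2 \le 144 a (K+1)^2 Q^2 \le a\,(12(K+1)Q)^2 \cdot \frac{144 a}{144 a}$... more carefully, $144(aK+1)KQ^2 \le 144 a(K+1)^2Q^2 = a\big(12(K+1)Q\big)^2$, so $\overline\eta_a \ge a^{-1/(a-2)}\big(12(K+1)Q\big)^{-2/(a-2)} \ge \frac1a \big(12(K+1)Q\big)^{-2/(a-2)} = \widetilde\eta_a$, using $a^{-1/(a-2)} \ge 1/a$. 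Hence the step size is valid.

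Next I would lower-bound the effective rate exponent. With $a = \lceil\log(12(K+1)Q)\rceil + 2$, we have $a - 2 \ge \log(12(K+1)Q)$, so $(12(K+1)Q)^{2/(a-2)} \le e^2$, giving $\widetilde\eta_a \ge \frac{1}{a e^2}$, i.e.
\begin{equation}
  \eta = \frac{\widetilde\eta_a}{3L(K+1)} \ge \frac{1}{3e^2 L\,a(K+1)}. \nn
\end{equation}
Plugging into \eqref{eq:thm2}, $F_k \le (1 - \eta\mu/18)^{(k+1)/(aK+1) - 1}\,c$ for $k \ge aK+1$. Using $1 - t \le e^{-t}$, it suffices to force $\eta\frac{\mu}{18}\big(\frac{k+1}{aK+1} - 1\big) \ge \log(c/\epsilon)$, i.e. $k \ge \frac{18(aK+1)}{\eta\mu}\log(c/\epsilon) + aK$. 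Substituting the lower bound on $\eta$ and $Q = L/\mu$ gives $\frac{18(aK+1)}{\eta\mu} \le 18 \cdot 3e^2 a(K+1) Q \cdot (aK+1) \le 54 e^2 a^2 (K+1)^2 Q$, since $aK+1 \le a(K+1)$. With $M = 54e^2$ and noting $aK \le aK$ in the remainder term, this is exactly \eqref{eq:iteration_comp}.

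The main obstacle — really the only nontrivial point — is the admissibility check $\widetilde\eta_a \le \overline\eta_a$, which hinges on the slightly loose-looking inequality $144(aK+1)KQ^2 \le a\,(12(K+1)Q)^2$ combined with $a^{-1/(a-2)} \ge 1/a$ (valid for $a \ge 3$); one must be careful that the factor $\frac1a$ in the definition of $\widetilde\eta_a$ is precisely what absorbs the discrepancy between $(aK+1)K$ and $(K+1)^2$ as well as the $a^{-1/(a-2)}$ term. Everything after that is bookkeeping: the exponent bound $(a-2)\ge\log(12(K+1)Q)$ controls the $(\cdot)^{2/(a-2)}$ factor by $e^2$, and the rest is the standard $(1-t)^n \le e^{-nt}$ manipulation to convert the linear rate into an iteration complexity. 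I would also remark that the choice $a = \Theta(\log(QK))$ is what makes $a^2 = \Theta(\log^2(QK))$ appear in the complexity, recovering the $\torder(QK^2\log(1/\epsilon))$ claim.
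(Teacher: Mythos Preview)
Your proposal is correct and follows essentially the same route as the paper: verify $\widetilde\eta_a \le \overline\eta_a$ via $(aK+1)K \le a(K+1)^2$ and $a^{-1/(a-2)} \ge 1/a$, then use $a-2 \ge \log(12(K+1)Q)$ to get $\widetilde\eta_a \ge 1/(ae^2)$, and finally convert the linear rate of Theorem~\ref{thm2} into an iteration count. The only cosmetic difference is that the paper first applies $(1-x)^\gamma \le 1-\gamma x$ and then $\log(1+x)\le x$, whereas you go directly through $1-t \le e^{-t}$; both yield the same bound $k \ge 54e^2 a^2(K+1)^2 Q \log(c/\epsilon) + aK$.
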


\begin{proof}
We begin the proof of this corollary by showing that $\widetilde{\eta}_a \leq \overline{\eta}_a$, i.e., $\eta = \frac{1}{3L(K+1)}\widetilde{\eta}_a$ is a valid step size satisfying \eqref{eq:thm2} of Theorem \ref{thm2}. Clearly, we have $(aK+1)K = a(K+\frac{1}{a})K \leq a (K+1)^2$ as $a\geq3$. Therefore, we obtain
\begin{equation}
  \overline{\eta}_a = \left( \frac{1}{144(aK+1)KQ^2} \right)^{\frac{1}{a-2}} \geq \left( \frac{1}{a} \right)^{\frac{1}{a-2}} \left( \frac{1}{12(K+1)Q} \right)^\frac{2}{a-2}.
\end{equation}
As $a\geq3$, we also have $\left( \frac{1}{a} \right)^{\frac{1}{a-2}} \geq \frac{1}{a}$, which indicates that $\overline{\eta}_a \geq \widetilde{\eta}_a$. Therefore, we can apply Theorem \ref{thm2} with step size $\eta = \frac{1}{3L(K+1)} \widetilde{\eta}_a$, which results in the following linear convergence
\begin{equation}
  F_k \leq c \left( 1 - \eta \frac{\mu}{18} \right)^{\frac{k+1}{aK+1}-1}, \label{eq:poc1}
\end{equation}
for any $k\geq aK+1$. Using the inequality $(1-x)^\gamma \leq 1-\gamma x$ for any $\gamma,x\in[0,1]$ in \eqref{eq:poc1} and noting that $a(K+1)\geq aK+1$, we obtain
\begin{equation}
  F_k \leq c \left( 1 - \eta \frac{\mu}{18a(K+1)} \right)^{k-aK}. \nn
\end{equation}
Taking the logarithm of both sides yields
\begin{align}
  \log(F_k) & \leq \log(c) + (k-aK) \log \left( 1 - \eta \frac{\mu}{18a(K+1)} \right) \nn\\
    & \leq \log(c) - (k-aK) \eta \frac{\mu}{18a(K+1)}, \label{eq:corr1}
\end{align}
where the last line follows since $\log(1+x)\leq x$ for any $x>-1$. Therefore, in order to achieve an $\epsilon$-optimal solution, the right-hand side of \eqref{eq:corr1} should be upper bounded by $\log(\epsilon)$, which implies
\begin{align}
  k & \geq \frac{18a(K+1)}{\eta\mu} \log(c/\epsilon) + aK \nn\\
    & =  \frac{54a(K+1)^2Q}{\widetilde{\eta}_a} \log(c/\epsilon) + aK, \label{eq:corr2}
\end{align}
where the equality follows since $\eta = \frac{1}{3L(K+1)} \widetilde{\eta}_a$. Picking $a=\lceil\log(12(K+1)Q)\rceil+2$, we can lower bound $\widetilde{\eta}_a$ as follows
\begin{align}
  \log(\widetilde{\eta}_a) & = \log \left( \frac{1}{a} \right) + \frac{2}{a-2} \log \left( \frac{1}{12(K+1)Q} \right) \nn\\
    & = - \log(a) - \frac{2}{\lceil\log(12(K+1)Q)\rceil} \log(12(K+1)Q) \nn\\
    & \geq - \log(a) - 2, \nn
\end{align}
which yields $\widetilde{\eta}_a \geq 1/(ae^2)$. Using this result in \eqref{eq:corr2}, we conclude that the PIAG algorithm is guaranteed to return an $\epsilon$-optimal solution after $k \geq 54e^2a^2(K+1)^2Q \log(c/\epsilon) + aK$ iterations.
\end{proof}

\section{Concluding Remarks}\label{sec:conclusion}
In this paper, we studied the PIAG method for additive composite optimization problems of the form \eqref{eq:goal}. We showed the first {linear convergence rate} result for the PIAG method and provided explicit convergence rate estimates that highlight the dependence on the condition number of the problem and the size of the window $K$ over which outdated component gradients are evaluated (under the assumptions that $f(x)$ is strongly convex and each $f_i(x)$ is smooth with Lipschitz gradients). Our results hold for any deterministic order (in processing the component functions) in contrast to the existing work on stochastic variants of our algorithm, which presents convergence results in expectation.

\bibliographystyle{plain}
\bibliography{piag_ref}

\end{document}